\documentclass[11pt]{amsart}
\usepackage[T1]{fontenc}
\usepackage[utf8]{inputenc}
\usepackage[margin=1in]{geometry}
\usepackage{amsmath,amssymb,amsthm,booktabs}
\usepackage{microtype}
\usepackage{needspace}
\usepackage{tikz}
\usepackage[hidelinks]{hyperref}

\newtheorem{theorem}{Theorem}[section]
\newtheorem{proposition}[theorem]{Proposition}
\newtheorem{corollary}[theorem]{Corollary}
\theoremstyle{remark}
\newtheorem{remark}[theorem]{Remark}
\newcommand{\R}{\mathbb R}
\newcommand{\Q}{\mathbb Q}
\newcommand{\PP}{\mathbb P}
\newcommand{\A}{\mathcal A}
\newcommand{\cO}{\mathcal O}
\newcommand{\cyc}{\operatorname{cyc}}

\title[A sporadic simplicial arrangement of 51 lines]{A sporadic simplicial arrangement of 51 lines}
\author{Tom\'as S. R. Silva}
\address{Instituto de Matem\'atica, Estat\'istica e Computa\c{c}\~ao Cient\'ifica (IMECC), Universidade Estadual de Campinas (UNICAMP), 13083-859, Campinas, S\~ao Paulo, Brazil}
\email{tomas@ime.unicamp.br}
\urladdr{www.ime.unicamp.br/~tomas}
\date{\today}

\hypersetup{
  pdftitle={A sporadic simplicial arrangement of 51 lines},
  pdfauthor={Tomás S. R. Silva},
  pdfsubject={A counterexample to the 37-line bound for sporadic simplicial arrangements}
}

\begin{document}
\begin{abstract}
We construct a sporadic simplicial arrangement of $51$ lines in the real projective plane. It is obtained from the icosahedral arrangement $\mathcal{A}(21,2)$ by adjoining the $30$ images of the line $y+\sqrt{\varphi}\,z=0$ under the rotation group of the icosahedron, where $\varphi=(1+\sqrt5)/2$. Its intersection multiplicities are $t_2=120$, $t_3=190$, $t_5=36$, and $t_6=15$, with all other $t_r$ zero. These counts give equality in Melchior's inequality and hence prove simpliciality. The arrangement is irreducible, belongs to neither of the two known infinite families of irreducible simplicial arrangements, and is not supersolvable. It therefore disproves the $37$-line bound for sporadic arrangements conjectured by Cuntz and M\"ucksch. We give explicit coordinates, a finite incidence certificate, a geometric explanation of the parameter $\sqrt{\varphi}$, and a standalone exact-arithmetic verifier.
\end{abstract}
\maketitle

\section{Introduction}

A finite arrangement of lines in $\PP^2(\R)$ is \emph{simplicial} if every component of its complement is a triangle. Besides the near-pencils, the classical constructions give two infinite families: $\mathcal R(1)$, obtained from the sides and symmetry axes of regular polygons, and $\mathcal R(2)$, obtained by adding the line at infinity to the even-polygon examples. Arrangements outside these families are called \emph{sporadic}.

The classification of simplicial arrangements has been developed through geometric constructions, catalogues, and computational enumeration. Gr\"unbaum's catalogue~\cite{Gru09} collected the classical examples; subsequent work of Cuntz~\cite{Cun12,Cun22} enlarged the list and investigated arrangements with prescribed numbers of lines. The database accompanying Cuntz, Elia, and Labb\'e~\cite{CEL22} provides coordinates and combinatorial data for these examples. The largest sporadic arrangements in these lists have $37$ lines.

The expectation that $37$ is an upper bound goes back to Gr\"unbaum~\cite[Conjecture~2.1]{Gru74}. Cuntz and M\"ucksch formulated the following precise statement~\cite[Conjecture~1.1]{CM19}: every irreducible simplicial arrangement of rank three with more than $37$ hyperplanes belongs to $\mathcal R(1)\cup\mathcal R(2)$. Equivalently, they also conjectured that every such arrangement is supersolvable~\cite[Conjecture~1.3]{CM19}. A projective line arrangement and its central arrangement of planes in $\R^3$ will be identified when discussing rank, irreducibility, and the characteristic polynomial.

We give a counterexample to both statements. Set
\[
 \varphi=\frac{1+\sqrt5}{2},\qquad s=\sqrt{\varphi}>0,
 \qquad K=\Q(s),\qquad s^4-s^2-1=0.
\]
Let $I\cong A_5$ (the alternating group on $5$ symbols) be the rotation group of the regular icosahedron with vertices
\[
 (0,\pm\varphi,\pm1)
 \quad\text{and their cyclic permutations}.
\]
We write $[a:b:c]$ for the line $ax+by+cz=0$, and $(x:y:z)$ for a point. Since the elements of $I$ are orthogonal, their action on line normals is the usual matrix action.

\Needspace{18\baselineskip}
\begin{theorem}\label{thm:main}
The arrangement
\[
 \A_{51}=I\cdot[1:0:0]\ \cup\
          I\cdot[0:\varphi:1]\ \cup\
          I\cdot[0:1:s]
\]
consists of $51$ distinct real lines and is simplicial and irreducible. Its nonzero intersection numbers are
\[
 (t_2,t_3,t_5,t_6)=(120,190,36,15),
\]
where $t_r$ counts points incident with exactly $r$ lines. It has $361$ vertices, $1080$ edges, and $720$ triangular chambers in $\PP^2(\R)$. Its central characteristic polynomial is
\[
 \chi_{\A_{51}}(T)=(T-1)(T^2-50T+669).
\]
In particular, $\A_{51}$ is sporadic and is not supersolvable.
\end{theorem}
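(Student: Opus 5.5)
The plan is to reduce everything to the intersection data $(t_r)$ and then read off all the remaining claims from them.

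\emph{Lines.} The three orbits are $I\cdot[1:0:0]$ with $15$ lines (the symmetry planes of the icosahedron), $I\cdot[0:\varphi:1]$ with $6$ lines (the normals are vertex directions, and antipodal vertices give the same line), and $I\cdot[0:1:s]$. For the third orbit I would show that the stabilizer of the line $[0:1:s]$ in $I$ has order $2$, giving $60/2=30$ lines. The relevant element is the half-turn about the $x$-axis, which sends $(0,1,s)\mapsto(0,-1,-s)$, the same line. It then remains to check that no other rotation fixes this normal up to sign and that no line of the third orbit coincides with one of the first $21$. For the latter it suffices to note that $s\notin\Q(\sqrt5)$, since $s^2=\varphi$ and $s$ has degree $4$, so the normal $(0,1,s)$ is not proportional to any normal with coordinates in $\Q(\sqrt5)$.

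\emph{Intersection counts.} Using the $I$-action, I enumerate the $\binom{51}{2}=1275$ pairs up to symmetry. For each intersection point $p=\ell\times\ell'$ I compute its multiplicity by counting the lines through it, working in exact arithmetic over $K$ with basis $1,s,s^2,s^3$. This produces a finite list of $I$-orbits of points, say points on $5$-fold axes, on $3$-fold axes, on $2$-fold axes, and points in generic position. The check is
\[
\sum_r t_r\binom r2=120+570+360+225=1275 .
\]
The vertex count follows: $V=120+190+36+15=361$. For edges, $E=\sum_r r\,t_r=240+570+180+90=1080$.

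\emph{Simpliciality.} I would invoke Melchior's inequality $\sum_r (r-3)t_r\le -3$, with equality if and only if the arrangement is simplicial. Here the sum is $-120+0+72+45=-3$, so equality holds and $\A_{51}$ is simplicial. As a consistency check, Euler's formula in $\PP^2$ gives $F=1+E-V=720$, and $3F=2E=2160$.

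\emph{Characteristic polynomial and the remaining claims.} For a central rank-3 arrangement,
\[
\chi(T)=T^3-nT^2+\Bigl(\sum_p (m_p-1)\Bigr)T-\bigl(1-n+\textstyle\sum_p(m_p-1)\bigr).
\]
Here $\sum_p(m_p-1)=120+380+144+75=719$, which gives $T^3-51T^2+719T-669$. This factors as $(T-1)(T^2-50T+669)$. The discriminant is $2500-2676<0$, so the quadratic has no real roots. Supersolvable arrangements have characteristic polynomials that factor over $\mathbb Z$ with nonnegative integer roots, so $\A_{51}$ is not supersolvable.

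Irreducibility follows from $\chi(T)/(T-1)$ being irreducible over $\mathbb Q$: for a reducible rank-3 arrangement, $\chi$ factors as $(T-1)^2(T-a)$. It also follows directly, since the lines are not a near-pencil and are not all through one point plus a line.

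Sporadicity is a comparison with the known families. $\mathcal R(1)$ and $\mathcal R(2)$ with $51$ lines have a point of multiplicity about $n/2$, namely $25$ or $26$. Here the maximal multiplicity is $6$, so $\A_{51}$ belongs to neither family.

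\emph{Main obstacle.} The heavy step is the exact computation of multiplicities. I would organize it by classifying the stabilizers of points. Points with nontrivial stabilizer lie on the $6+10+15$ rotation axes, and there each multiplicity is computed from one representative per axis. For points with trivial stabilizer, multiplicity $3$ or $2$, I would count triples by exact determinant vanishing over $K$ restricted to orbit representatives. The final check is then the pair-sum identity above together with the orbit sizes $120,190,36,15$; in particular $190=60+60+\dots$ has to be decomposed into orbits.
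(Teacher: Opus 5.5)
Your outline follows the paper's proof: exact incidence data, completeness via $\sum_r t_r\binom r2=\binom{51}{2}$, Melchior's equality, Euler's formula, and $\chi$ from $b_2=\sum_r(r-1)t_r$ together with $\chi(1)=0$. Your deviations are sound.
\begin{itemize}
\item Irreducibility via $(T-1)^2\nmid\chi$ works: a product of a rank-one and a rank-two factor with $51$ lines would give $(T-1)^2(T-49)$.
\item Sporadicity via the maximal multiplicity works, in place of the paper's line counts $2n$ and $4m+1$.
\item The orbit size $30$ is correct once you finish the stabilizer step. $gn=n$ is impossible because $n$ is not a $\Q(\sqrt5)$-rational axis. $gn=-n$ forces $g$ to be a half-turn whose axis lies on $[0:1:s]$, and the only $2$-fold axis with $y+sz=0$ is $(1:0:0)$.
\end{itemize}

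The step that carries all the content, the incidence data, is only promised, and the plan you give for it rests on a false premise. The claim that points with nontrivial stabilizer lie on the $6+10+15$ rotation axes holds on $S^2$ but not in $\PP^2(\R)$. A half-turn about $a$ acts as $-1$ on $a^\perp$, so it fixes every point of the mirror line $a^\perp$. Every rotation axis also lies on a mirror. Hence the points with nontrivial stabilizer are exactly the points of the $15$ lines of $\cO_{15}$.

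In the actual data, seven of the twelve orbits are of this kind and have size $30$. For instance, the $30$ quintuple points $I\cdot(0:1:-s)$, with profile $(1,0,4)$, lie on no rotation axis. They contradict your expectation that off-axis points have multiplicity $2$ or $3$. The correct decompositions are $36=6+30$, $190=10+4\cdot30+60$, and $120=2\cdot30+60$, not $190=60+60+\cdots$. Bookkeeping that treats every off-axis orbit as free would double-count these orbits.

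The right reduction is transitivity of $I$ on $\cO_{15}$. All non-free orbits appear among the $20$ vertices on $x=0$, which is why ten of the paper's twelve representatives have $x=0$. The remaining orbits are free; here they are just $I\cdot(\varphi:s:-1)$ and $I\cdot(1:-s^3:s^3)$.

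The paper's proof is exactly this explicit twelve-orbit certificate. Each entry is checked by substitution, and completeness follows from the pair identity. That identity is a real check only in this direction. If you literally group all $1275$ pairs by their intersection point, it holds automatically.
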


The construction adds one orbit of $30$ lines to the known arrangement $\mathcal{A}(21,2)$. Section~\ref{sec:construction} gives all $51$ normals in a compact form. Section~\ref{sec:certificate} proves the theorem using twelve point orbits and the pair-count identity. Section~\ref{sec:geometry} explains a triple concurrence that selects $s=\sqrt{\varphi}$. The proof requires only arithmetic in $K$; accompanying Python code reproduces every intersection exactly.

\section{The construction}\label{sec:construction}

The group $I$ has order $60$. For explicit computations it is generated by
\[
 \kappa=\begin{pmatrix}0&0&1\\1&0&0\\0&1&0\end{pmatrix},
 \qquad
 \eta=\begin{pmatrix}1&0&0\\0&-1&0\\0&0&-1\end{pmatrix},
 \qquad
 \rho=\frac12\begin{pmatrix}
 1&-\varphi&\varphi^{-1}\\
 \varphi&\varphi^{-1}&-1\\
 \varphi^{-1}&1&\varphi
 \end{pmatrix}.
\]
These matrices have orders $3$, $2$, and $5$, respectively, and preserve the twelve vertices of the icosahedron. Define
\[
 \cO_{15}=I\cdot[1:0:0],\qquad
 \cO_6=I\cdot[0:\varphi:1],\qquad
 \cO_{30}=I\cdot[0:1:s].
\]
The first orbit consists of the $15$ mirrors of the reflection arrangement of type $H_3$; the second consists of the six lines polar to the fivefold rotation axes. Their union is the classical simplicial arrangement $\mathcal{A}(21,2)$ of~\cite{Gru09}
\[
 B=\cO_{15}\cup\cO_6=\mathcal{A}(21,2).
\]
Here $\mathcal{A}(n,k)$ denotes a catalogue label: $n$ is the number of lines and $k$ distinguishes examples with that number (see~\cite{Gru09}). The notation $\A_{51}$ is used for the present construction without assigning it a catalogue index.

The following formulas specify all the lines without requiring group enumeration. Write $\cyc(a,b,c)$ for the set of lines represented by the three cyclic permutations of $(a,b,c)$. Then
\begin{align}
 \cO_{15}
 &=\cyc(1,0,0)\ \cup\
   \bigcup_{\epsilon,\epsilon'=\pm1}
       \cyc(1,\epsilon\varphi,\epsilon'\varphi^{-1}),\label{eq:15}\\
 \cO_6
 &=\bigcup_{\epsilon=\pm1}\cyc(0,\varphi,\epsilon),\label{eq:6}\\
 \cO_{30}
 &=\bigcup_{\epsilon=\pm1}\cyc(0,1,\epsilon s)\ \cup\
   \bigcup_{\epsilon,\epsilon',\delta=\pm1}
       \cyc\bigl(1,\epsilon(1+\delta s),
                    \epsilon'(1+\delta\varphi^{-1}s)\bigr).\label{eq:30}
\end{align}
All signs in the unions vary independently; the same sign $\delta$ occurs in both coordinates in~\eqref{eq:30}. The formulas contain $3+12$, $6$, and $6+24$ distinct projective normals, respectively, and the three sets are disjoint. Their equality with the stated $I$-orbits can be checked by applying the generators. Thus
$\A_{51}=\cO_{15}\cup\cO_6\cup\cO_{30}$ is an explicit real arrangement defined over $K$.

\begin{figure}[htbp]
\centering
\input{figure.tex}
\caption{The arrangements $B=\mathcal{A}(21,2)$ (left) and $\A_{51}$ (right), represented by great circles on a hemisphere viewed along a fivefold axis. Antipodal boundary points are identified. The $\cO_{15}$ lines are in black, $\cO_6$ in blue, and $\cO_{30}$ in red.}
\label{fig:arrangements}
\end{figure}

\section{Incidences and proof of simpliciality}\label{sec:certificate}

For an essential projective line arrangement, let $f_0,f_1,f_2$ denote the numbers of vertices, edges, and chambers. Euler's formula and edge--chamber incidence counting give
\[
 f_0=\sum_{r\ge2}t_r,\qquad
 f_1=\sum_{r\ge2}r\,t_r,\qquad
 f_2=1-f_0+f_1,\qquad 3f_2\le2f_1.
\]
The last inequality is an equality exactly when every chamber is triangular. Rearranging yields Melchior's inequality~\cite{Mel41},
\begin{equation}\label{eq:melchior}
 \Delta(\A):=t_2-3-\sum_{r\ge4}(r-3)t_r\ge0,
 \qquad
 \A\text{ is simplicial}\ \Longleftrightarrow\ \Delta(\A)=0.
\end{equation}

\begin{proposition}\label{prop:certificate}
The intersection points of $\A_{51}$ form the twelve $I$-orbits in Table~\ref{tab:points}. If a point has profile $(a,b,c)$, it lies on exactly $a$ lines of $\cO_{15}$, $b$ lines of $\cO_6$, and $c$ lines of $\cO_{30}$.
\end{proposition}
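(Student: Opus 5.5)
The plan is to certify the table by exhibiting incidences and then proving exhaustiveness with the pair-count identity. Every intersection point lies on at least two lines, and two distinct lines meet in exactly one point. So if $m(P)$ denotes the number of lines of $\A_{51}$ through $P$, then
\[
 \sum_{P} \binom{m(P)}{2}=\binom{51}{2}=1275,
\]
with the sum taken over all intersection points. Finer versions hold too. The pairs with one line from $\cO_i$ and one from $\cO_j$ must add up to $|\cO_i|\,|\cO_j|$, and the pairs within $\cO_i$ to $\binom{|\cO_i|}{2}$. Here $(|\cO_i|)=(15,6,30)$, so the targets are $105,15,435,90,450,180$.

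\emph{Step 1: representatives.} For each of the twelve orbits in Table~\ref{tab:points} I take an explicit representative point $P$, with coordinates in $K$. A natural choice is a point fixed by a suitable cyclic subgroup: on a two-, three- or fivefold axis, or on a mirror of $\cO_{15}$. I then check that its dot products with all $51$ normals in \eqref{eq:15}--\eqref{eq:30} vanish for exactly $a$, $b$, $c$ normals of the three orbits respectively. This is finite arithmetic in $K$, reducing modulo $s^4=s^2+1$. Deciding whether an element of $K$ is zero is exact once it is written in the basis $1,s,s^2,s^3$.

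\emph{Step 2: orbit sizes and the count.} The orbit size is $60/|\mathrm{Stab}_I(P)|$. I determine the stabilizer directly; a lower bound comes from the fixing axis and an upper bound from the profile, since the stabilizer permutes the lines through $P$. I also check that the twelve orbits are pairwise distinct, which is easy since distinct profiles or distinct stabilizer orders already separate most of them. Next I verify, orbit by orbit, that
\[
 \sum_{\text{orbits}} |\text{orbit}|\left[\binom{a}{2},\ \binom{b}{2},\ \binom{c}{2},\ ab,\ ac,\ bc\right]
\]
equals the six targets above. Because of $I$-invariance, every point of an orbit has the same profile. Hence the listed points already account for every pair of lines, so no other intersection point can exist. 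This also shows that no line pair coincides, which is consistent with the count of $51$ distinct normals.

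\emph{The main obstacle.} I expect the hard part to be finding the representatives for the orbits involving $\cO_{30}$, and especially confirming the exceptional concurrences. Generic $(0,0,2)$ points are easy, but the multiple points with $c\ge2$ and $a+b\ge1$ (the triple concurrence responsible for $s=\sqrt\varphi$) require solving a $2\times2$ intersection exactly and then testing the remaining normals. To find these points I would intersect $[0:1:s]$ with each of the other $50$ lines. I would then group the resulting points by $I$-equivalence, using invariants such as the $I$-invariant quadratic form $x^2+y^2+z^2$ evaluated on normalized coordinates together with the profile. This list of $50$ intersections on one line is itself a useful check: the points on a single $\cO_{30}$ line must sum to $50$ over $(m(P)-1)$. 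Applying that check for one $\cO_{15}$ line ($14+6+30$) and one $\cO_6$ line as well gives an independent confirmation of the table.
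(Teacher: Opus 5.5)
Your proposal is correct and is essentially the paper's proof. The common steps are: profiles by exact substitution of the twelve representatives into the $51$ normals, exact orbit sizes and pairwise disjointness, and completeness from $\sum_P\binom{m(P)}{2}=\binom{51}{2}$. Your six-way split by orbit type is a slightly stronger cross-check than the paper's count, and the table meets it with totals $105,15,435,90,450,180$.

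One caution concerns disjointness of the four size-$30$ orbits of profile $(1,0,2)$. Neither profile nor stabilizer order separates them, and $x^2+y^2+z^2$ is not a well-defined invariant of projective points. You can enumerate the orbits as the paper does. Alternatively, each such point lies only on the mirror $x=0$, whose stabilizer in $I$ (the diagonal sign matrices) acts by $(0:1:t)\mapsto(0:1:\pm t)$, so it suffices that the four values of $|t|$ are distinct.
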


\begin{table}[htbp]
\centering
\small
\renewcommand{\arraystretch}{1.12}
\begin{tabular}{@{}lrrc@{}}
\toprule
Representative point & Orbit size & Multiplicity & Profile\\
\midrule
$(0:0:1)$                         &15&6&$(2,2,2)$\\
$(0:1:\varphi^{-1})$              &6 &5&$(5,0,0)$\\
$(0:1:-s)$                       &30&5&$(1,0,4)$\\
$(0:1:-1+\varphi^{-1}s)$          &30&3&$(1,0,2)$\\
$(0:1:-1-\varphi)$               &10&3&$(3,0,0)$\\
$(0:1:-1-\varphi^{-1}s)$          &30&3&$(1,0,2)$\\
$(0:1:-\varphi-\varphi s)$        &30&3&$(1,0,2)$\\
$(0:1:-\varphi+\varphi s)$        &30&3&$(1,0,2)$\\
$(\varphi:s:-1)$                 &60&3&$(0,1,2)$\\
$(0:1:\varphi^{-1}s)$             &30&2&$(1,0,1)$\\
$(0:1:-\varphi)$                 &30&2&$(1,1,0)$\\
$(1:-s^3:s^3)$                   &60&2&$(0,0,2)$\\
\bottomrule
\end{tabular}
\caption{A finite incidence certificate for $\A_{51}$. Each row denotes one full orbit under the $60$ rotations of $I$. The listed orbits are pairwise disjoint and contain $361$ points in total.}
\label{tab:points}
\end{table}

\begin{proof}
All entries are verified in $K=\Q[s]/(s^4-s^2-1)$. Apply the group generators to each representative, identifying proportional triples, to obtain its orbit and check that the twelve resulting sets are disjoint. Substitution of the representatives into the line equations~\eqref{eq:15}--\eqref{eq:30} gives the profiles in the last column. These operations involve only rational arithmetic and reduction by $s^4=s^2+1$; they are also implemented in the accompanying verifier.

For example, $(0:0:1)$ lies on the two coordinate mirrors $x=0,y=0$, the two lines $[\varphi:\pm1:0]$ of $\cO_6$, and the two lines $[1:\pm s:0]$ of $\cO_{30}$. Its profile is therefore $(2,2,2)$. The point $(\varphi:s:-1)$ lies on $[1:0:\varphi]$, $[0:1:s]$, and $[1:-s:0]$, giving the profile $(0,1,2)$; direct substitution excludes any further incident line.

Summing the orbit sizes at each multiplicity gives
\[
 t_2=120,\qquad t_3=190,\qquad t_5=36,\qquad t_6=15.
\]
To see that no intersection has been omitted, count unordered pairs of lines. The listed points account for
\begin{equation}\label{eq:pairs}
 120\binom22+190\binom32+36\binom52+15\binom62
 =1275=\binom{51}{2}.
\end{equation}
Every pair of distinct projective lines has exactly one intersection, so the table is complete.
\end{proof}

\begin{proof}[Proof of Theorem~\ref{thm:main}]
The coordinate formulas show that there are $51$ distinct lines, and the coordinate mirrors show that the arrangement has rank three. Proposition~\ref{prop:certificate} gives
\[
 \Delta(\A_{51})=120-3-2\cdot36-3\cdot15=0.
\]
Hence $\A_{51}$ is simplicial by~\eqref{eq:melchior}. The formulas for $f_i$ give
\[
 (f_0,f_1,f_2)=(361,1080,720).
\]
The associated arrangement on the sphere has $1440$ triangular chambers.

A reducible essential central arrangement of rank three is a product of arrangements of ranks one and two; its projectivization is a near-pencil. Such an arrangement of $51$ lines has a point of multiplicity $50$, whereas $\A_{51}$ has maximum multiplicity $6$. Thus $\A_{51}$ is irreducible. The families $\mathcal R(1)$ and $\mathcal R(2)$ have, respectively, $2n$ lines and $4m+1$ lines~\cite{CM19}. Since $51$ is odd and $51\equiv3\pmod4$, $\A_{51}$ belongs to neither family and is sporadic.

For the central characteristic polynomial, the coefficient of $T$ is
\[
 b_2=\sum_{r\ge2}(r-1)t_r
 =120+380+144+75=719.
\]
Using $\chi_{\A_{51}}(1)=0$ gives
\[
 \chi_{\A_{51}}(T)=T^3-51T^2+719T-669
                 =(T-1)(T^2-50T+669).
\]
The quadratic factor has discriminant $-176$. Supersolvable arrangements have characteristic polynomials that split into integer linear factors~\cite[Theorem~2.40]{CM19}. Therefore $\A_{51}$ is not supersolvable.
\end{proof}

\begin{corollary}
The $37$-line bound for sporadic real simplicial arrangements is false. Conjectures~1.1 and~1.3 of Cuntz and M\"ucksch~\cite{CM19} are false.
\end{corollary}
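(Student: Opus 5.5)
The corollary follows from Theorem~\ref{thm:main} by matching its conclusions against the hypotheses of the two conjectures; no new computation is needed.

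For the first sentence, I would note that Theorem~\ref{thm:main} gives a real simplicial arrangement $\A_{51}$ of $51>37$ lines. It is irreducible, so it is not a near-pencil. By the parity and residue argument in the proof of the theorem it lies outside $\mathcal R(1)\cup\mathcal R(2)$: the line counts there are $2n$ and $4m+1$, while $51$ is odd and $51\equiv 3\pmod 4$. By definition, then, $\A_{51}$ is sporadic with more than $37$ lines, so the bound fails.

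For Conjecture~1.1 of~\cite{CM19}, I would pass to the central arrangement of $51$ planes in $\R^3$. This identification is the one fixed in the introduction, where rank, irreducibility and simpliciality are transferred between the projective and central pictures. The proof of the theorem shows via the coordinate mirrors that the central arrangement is essential of rank three. It is irreducible by the near-pencil argument, and simplicial. It has $51>37$ hyperplanes and does not belong to $\mathcal R(1)\cup\mathcal R(2)$, which contradicts Conjecture~1.1.

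For Conjecture~1.3, the same central arrangement is irreducible, simplicial, of rank three, and has more than $37$ hyperplanes. Theorem~\ref{thm:main} shows it is not supersolvable, since $T^2-50T+669$ has negative discriminant and so $\chi$ does not split over $\mathbb Z$, as supersolvability would require. This is a counterexample to Conjecture~1.3 as well.

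The only point that needs care is confirming that the hypotheses of the conjectures, as stated in~\cite{CM19}, match the properties proved here: real, rank three, irreducible, more than $37$ hyperplanes. The theorem covers each of these directly, so I expect no real obstacle.
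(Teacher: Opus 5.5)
Your proposal is correct and follows the paper's route: the paper gives no separate argument and treats the corollary as immediate from Theorem~\ref{thm:main}. You read off the same facts, namely a simplicial, irreducible, rank-three arrangement of $51>37$ lines, outside $\mathcal R(1)\cup\mathcal R(2)$ by the line-count argument, and not supersolvable because $T^2-50T+669$ does not split over $\mathbb Z$.
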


The example does not settle whether there are only finitely many sporadic simplicial arrangements, or whether some larger uniform bound on their numbers of lines exists.

\section{Why the parameter \texorpdfstring{$\sqrt{\varphi}$}{sqrt(phi)} appears}\label{sec:geometry}

Consider the pencil $\ell_t=[0:1:t]$ through $P=(1:0:0)$, with $\ell_\infty=[0:0:1]$. For finite $t$, the two orbit lines
\[
 \ell_t=[0:1:t],\qquad
 \eta\kappa^{-1}\ell_t=[1:-t:0]
\]
meet at $(t^2:t:-1)$. This point belongs to the line
$[1:0:\varphi]=\kappa[0:\varphi:1]$ of $\cO_6$ exactly when
\[
 t^2-\varphi=0.
\]
Thus this concurrence selects $t=\pm s$. At $t=s$, the orbit $I\cdot\ell_s$ is $\cO_{30}$, and the intersection point is $(\varphi:s:-1)$, whose orbit consists of the $60$ triple points with profile $(0,1,2)$ in Table~\ref{tab:points}. The complete incidence calculation in Section~\ref{sec:certificate} proves simpliciality.

\begin{remark}\label{rem:field}
The quartic extension in the construction cannot be removed by a real projective change of coordinates to $\Q(\sqrt5)$. With the convention $\operatorname{CR}(0,\infty;c,d)=c/d$, four lines of $\A_{51}$ through $P$ have cross-ratio
\[
 \operatorname{CR}(0,\infty;s,\varphi^{-1})=\varphi s=s^3.
\]
This number does not lie in $\Q(\sqrt5)$: otherwise $s=s^3/\varphi$ would lie there, but $\varphi$ is not a square in $\Q(\sqrt5)$, since its norm is $-1$. Cross-ratios of concurrent lines are projectively invariant. Consequently $\A_{51}$ has no projectively equivalent realization over $\Q(\sqrt5)$. This statement concerns the given projective realisation, without asserting rigidity of its intersection lattice.
\end{remark}

\section{Exact verification}\label{sec:verification}

The accompanying file \texttt{verify\_a51.py} is a standalone verifier requiring only Python~3.9 or later and its standard library. It constructs $I$, generates the three line orbits, and checks agreement with the closed formulas~\eqref{eq:15}--\eqref{eq:30}. Independently of the point table, it forms the cross product of every pair of line normals, identifies proportional point coordinates, and tests every point against all $51$ lines. It then verifies the twelve point orbits, the pair identity~\eqref{eq:pairs}, and the numerical invariants in Theorem~\ref{thm:main}. It also checks that each of the $21$ original lines contains $20$ vertices and each of the $30$ added lines contains $22$. Every field element is represented as $a_0+a_1s+a_2s^2+a_3s^3$, with $a_i\in\Q$; arithmetic and zero tests are exact. The chosen real embedding is specified by $s=\sqrt{(1+\sqrt5)/2}>0$. No tolerance or numerical root recognition enters the incidence computation. The coordinate data and an exported incidence certificate are supplied as ancillary files. The basic check is run by
\begin{verbatim}
python3 verify_a51.py
\end{verbatim}
The option \texttt{--export-json certificate.json} writes all line normals, intersection points, and their incidences in this field representation. These files make the finite calculations in the proof reproducible without relying on a separate arrangement package.

\section*{Acknowledgments}

The author acknowledges support from the São Paulo Research Foundation (FAPESP), grant 2022/09891-4.

\section*{Data and code availability}

The data and code supporting the computations in this article
are available at: 

\href{https://github.com/TomasSilva/A51-simplicial-arrangement}{https://github.com/TomasSilva/A51-simplicial-arrangement}

\section*{AI usage}
Claude (Anthropic) assisted with the computations and the first draft of this article. 
Codex (OpenAI) assisted with revising the exposition, organisation, bibliography and \LaTeX{} source, and with checking compilation and layout.

\bibliographystyle{plain}
\bibliography{refs}
\end{document}